\documentclass[a4paper,12pt]{amsart}

\usepackage{amsfonts}
\usepackage{amsmath}
\usepackage{amssymb}
\usepackage{graphicx}

\usepackage[usenames]{color}
\usepackage[colorlinks]{hyperref}

\newtheorem{thm}{Theorem}[section]
\newtheorem{lem}[thm]{Lemma}

\theoremstyle{definition}

\theoremstyle{remark}
\newtheorem{rem}{Remark}[section]

\numberwithin{equation}{section}

\def\d{\mathrm d}

\begin{document}

\title[On the $q$-analogue of Duhamel's principle]{On the $q$-analogue of Duhamel's principle}

\author[M. Sebih]{Mohammed Elamine Sebih}
\address{
  Mohammed Elamine Sebih:
  \endgraf
  Laboratory of Geomatics, Ecology and Environment (LGEO2E)
  \endgraf
  University Mustapha Stambouli of Mascara, 29000 Mascara
  \endgraf
  Algeendgraf
  {\it E-mail address} {\rm sebihmed@gmail.com, ma.sebih@univ-mascara.dz}
}

\author[S. Shaimardan]{Serikbol Shaimardan}
\address{
  Serikbol Shaimardan:
  \endgraf 
  L. N. Gumilyov Eurasian National University,
  \endgraf
  Astana, Kazakhstan
  \endgraf
  and
  \endgraf   
  Department of Mathematics: Analysis, Logic and Discrete Mathematics
  \endgraf
  Ghent University, Krijgslaan 281, Building S8, B 9000 Ghent
  \endgraf
  Belgium
  \endgraf  
  {\it E-mail address:} {\rm shaimardan.serik@gmail.com}
  }

  \author[I. Ali]{Irfan Ali}
\address{
  Irfan Ali:
  \endgraf
  Department of Mathematics: Analysis, Logic and Discrete Mathematics
  \endgraf
  Ghent University, Krijgslaan 281, Building S8, B 9000 Ghent
  \endgraf
  Belgium
  \endgraf
  and
  \endgraf
  Department of Mathematical Sciences,
  \endgraf
  BUITEMS, Airport Road Quetta
  \endgraf
  Pakistan
  \endgraf
  {\it E-mail address} {\rm irfan.ali@ugent.be, irfan.changazi1@gmail.com}
}

\thanks{This research was funded by the FWO Odysseus 1 grant G.0H94.18N: Analysis and Partial Differential Equations, and by the Methusalem programme of the Ghent University Special Research Fund (BOF) (Grant number 01M01021).}

\keywords{Evolution equations, non-homogeneous problem, Duhamel's principle, $q$-calculus.}
\subjclass[2020]{34A99, 05A30, 33D05.}

\begin{abstract}
In this paper, we revisit the classical Duhamel's principle and provide a
self-contained proof of this fundamental tool for linear evolution equations and systems of coupled equations. Moreover, we establish a $q$-analogue of Duhamel's
principle for $q$-evolution equations of order $k\geq 1$ generated by
Jackson's $q$-difference operator.
\end{abstract}

\maketitle

%%%%%%%%%%%%%%%%%%%%%%%%%%%%%%%%%%%%%%%%%%%%%%%%%%%%%%%%%%%%%%%%%%%%%%%%%%%%%%%%%%%%%%%%%%%%%%%%%%%%%%%%%%%%%%%%%%%%%%%%%%%%%%%%%%%%%%%%%%%%%%%%%%%%%%%%%%%%%%%%%%%%%%%%%%%%%%%%%%%%%%%%%%%%%%%%%%%%

\section{Introduction}
The theory of $q$-calculus, also referred to as quantum calculus, provides a generalisation 
of classical calculus in which the notion of limit is replaced by a deformation 
parameter $q$. Within this framework, the classical notions of differentiation and integration are replaced 
by the $q$-derivative and the Jackson $q$-integral, respectively. Both operators 
depend on the parameter $q$ and reduce to their classical counterparts as $q \to 1$. Consequently, $q$-calculus can be viewed as a deformation of classical 
analysis that preserves many of its fundamental structures while introducing 
discrete features \cite{Ern12,GR04}.

The foundations of $q$-calculus were established by F. H. Jackson, who introduced 
the Jackson $q$-integral and developed many basic results concerning 
$q$-difference operators \cite{Jac10}. Since then, the theory has developed into
a powerful tool with applications in various areas of mathematics, including
basic hypergeometric functions, orthogonal polynomials, combinatorics, and
mathematical physics \cite{GR04,Ism05}. In particular, $q$-calculus 
plays a central role in the study of $q$-difference equations, which naturally arise 
in discrete dynamical systems and in certain models of quantum theory.

Over the past decades, many authors have investigated qualitative and 
analytical properties of $q$-difference equations, including existence, 
oscillation, and stability of solutions. Recent works have addressed various 
classes of $q$-difference and fractional $q$-difference equations, showing 
the wide applicability of $q$-calculus techniques in modern mathematical 
analysis \cite{AHH24,ZG24,SSK24,KR24}. These developments highlight the 
importance of extending classical analytical tools to the $q$-setting.

Among the fundamental analytical techniques used in the study of partial differential equations is Duhamel's principle, which is a key in proving existence and uniqueness of solutions to non-homogeneous problems. Numerous equations, including the wave equation, the heat equation and the Scr\"odinger equation are studied using this principle \cite{ER18,Eva98,Seb22}. It is also an indispensable tool in the study of nonlinear partial differential equations where one treats the nonlinearity as an inhomogeneity. It provides a representation formula for 
the solution of nonhomogeneous linear differential equations. In the classical 
setting, this principle expresses the solution of a forced equation in terms of 
the solution of the corresponding homogeneous problem together with an integral involving the forcing term.

Motivated by the increasing interest in $q$-difference equations and quantum
analogues of classical models, it is natural to investigate whether Duhamel's
principle admits a suitable formulation within the framework of $q$-calculus.
Such an extension is particularly relevant in the study of $q$-evolution
equations, where classical differential operators are replaced by
$q$-difference operators and the classical integral is replaced by the
Jackson $q$-integral.

Surprisingly, a carefully written proof of the classical Duhamel's principle seems to be missing in the literature. One aim in this paper is therefore to fill this gap and to provide a clear proof of this classical result. The main objective however is to establish a $q$-analogue of 
Duhamel's principle for a class of $q$-differential equations generated by Jackson's operator. This result provides a useful analytical framework for studying forced $q$-evolution equations and contributes to the ongoing development of 
analytical methods in quantum calculus.

The paper is organised as follows: In the next section we give the necessary preliminaries about quantum calculus. For convenience of the reader, Duhamel's principle in the classical case is included in Section 3. In Section 4, we provide the proofs for analogous versions of Duhamel's principle for systems of coupled equations of orders $1$ and $2$, as well as for mixed-order systems. Section 5 contains the main results of the paper together with their
proofs.

\section{Preliminaries}
In this section, we recall some  notations  related to the  $q$-calculus. We will always assume that $0<q<1$. The $q$-real number $[\alpha ]_q$ is defined by
\begin{equation}
[\alpha ]_{q}:=\frac{1-q^{\alpha }}{1-q}.    \end{equation}
We define the $q$-lattice $\mathbb{R}_q \subset \mathbb{R}$ by
\begin{equation*}
    \mathbb{R}_q := \{ x^k : x=\pm q^k; ~ k\in \mathbb{Z}\},
\end{equation*}
and accordingly, for $a,b \in \mathbb{R}$,
\begin{equation*}
    [a,b]_q := [a,b]\cap \mathbb{R}_q.
\end{equation*}
The $q$-analogue differential operator $D_{q}f(x)$ (also called Jackson's operator) is defined by 
\begin{equation}\label{Jack}
D_{q}f(x)=\frac{f(x)-f(qx)}{x(1-q)}.
\end{equation}
The $q$-derivative of a product of two functions has the form
\begin{equation*}
D_{q}(fg)(x)= f(qx)D_{q}(g)(x)+D_{q}(f)(x)g(x),
\end{equation*}
and the $q^2$-derivative (called Rubin's operator) is defined by (see \cite{Rubin1} and \cite{Rubin2})
\begin{equation}\label{Rub}
\partial_{q}f(x)=\frac{f(q^{-1}x)+f(-q^{-1}x)
-f(qx)+f(-qx)-2f(-x)}{2x(1-q)}.
\end{equation}
Jackson's and Rubin's operators are related by
\begin{equation*}
   \partial_{q}f(x) =  D_{q}f(x) + D_{q^{-1}}f(x).
\end{equation*}
The $q$-integral (or Jackson integral) is defined by (see \cite{Jac10})
\begin{equation}
\int\limits_0^x f(t)d_{q}t=(1-q)x\sum\limits_{m=0}^\infty q^{m}f(xq^{m}),
\end{equation}
and, more generally, 
\begin{equation*} 
\int\limits_a^b f(x)d_{q}x=\int\limits_0^b f(x)d_{q}x-
\int\limits_0^a f(x)d_{q}x,
\end{equation*}
provided the sums converge absolutely. Note that 
\begin{equation*}
\int\limits_a^x D_qf(t)d_{q}t= f(x) -f(a). 
\end{equation*}
We refer the reader to \cite{CK20,Ern12} for more details about the $q$-calculus.

In the following lemma, we give the $q$-version of the differentiation of functions defined in terms of integrals depending on a parameter.

\begin{lem}\label{lemma1}
    Let $f:\mathbb{R}^2\rightarrow\mathbb{R}$ be a function such that $D_{q,x}f(x,y)$ exists. Then, the function $F$ defined on $\mathbb{R}$ by $F(x)=\int_{0}^{x}f(x,t)\d_{q} t$ is $q$-differentiable and we have
    \begin{equation*}
        D_{q}F(x)=f(qx,x) + \int_{0}^{x}D_{q,x}f(x,t)\d_{q}t,
    \end{equation*}
    or equivalently
    \begin{equation*}
        D_{q}F(x)=f(x,x) + \int_{0}^{qx}D_{q,x}f(x,t)\d_{q}t.
    \end{equation*}    
\end{lem}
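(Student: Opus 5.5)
We compute $D_qF(x)=\frac{F(x)-F(qx)}{x(1-q)}$ directly from the definitions, using an add-and-subtract step to separate the dependence of the integrand on $x$ from the dependence of the integration limit on $x$. For the first formula, write
\begin{equation*}
F(x)-F(qx)=\int_0^x f(x,t)\,\d_q t-\int_0^{qx} f(qx,t)\,\d_q t
=\Bigl(\int_0^x f(qx,t)\,\d_q t-\int_0^{qx} f(qx,t)\,\d_q t\Bigr)+\int_0^x\bigl(f(x,t)-f(qx,t)\bigr)\d_q t .
\end{equation*}
Dividing the second bracket by $x(1-q)$ and using linearity of the Jackson integral gives $\int_0^x D_{q,x}f(x,t)\,\d_q t$. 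This step is valid because $D_{q,x}f(x,t)=\frac{f(x,t)-f(qx,t)}{x(1-q)}$ and the factor $1/(x(1-q))$ does not depend on $t$.

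The first bracket is $\int_{qx}^{x} g(t)\,\d_q t$ with $g(t)=f(qx,t)$, where the first variable is frozen. From the definition of the Jackson integral,
\begin{equation*}
\int_0^x g\,\d_q t-\int_0^{qx} g\,\d_q t=(1-q)x\sum_{m\ge0}q^m g(xq^m)-(1-q)x\sum_{m\ge0}q^{m+1}g(xq^{m+1}).
\end{equation*}
Reindexing the second sum by $m\mapsto m-1$, every term cancels except the $m=0$ term, so the bracket equals $(1-q)x\,g(x)$. Dividing by $x(1-q)$ gives $g(x)=f(qx,x)$, which is the first formula.

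For the equivalent form we split the other way:
\begin{equation*}
F(x)-F(qx)=\Bigl(\int_0^x f(x,t)\,\d_q t-\int_0^{qx} f(x,t)\,\d_q t\Bigr)+\int_0^{qx}\bigl(f(x,t)-f(qx,t)\bigr)\d_q t .
\end{equation*}
The same telescoping argument, now with $g(t)=f(x,t)$, turns the first bracket into $(1-q)x\,f(x,x)$. The second bracket, after dividing by $x(1-q)$, becomes $\int_0^{qx}D_{q,x}f(x,t)\,\d_q t$.

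The main obstacle is justifying the manipulations of the series: splitting integrals, reindexing, and pulling the constant factor out of the sum. Each of these requires the Jackson sums involved to converge absolutely. The paper adopts this as a standing assumption whenever $q$-integrals are written, so I would state explicitly that $t\mapsto f(x,t)$ and $t\mapsto f(qx,t)$ are $q$-integrable on $[0,x]_q$. Absolute convergence of the sum for $D_{q,x}f$ then follows from linearity. With that hypothesis in place the proof is purely algebraic. I would also remark that the two expressions for $D_qF(x)$ agree, since their difference is
\begin{equation*}
f(qx,x)-f(x,x)+\int_{qx}^x D_{q,x}f(x,t)\,\d_q t=f(qx,x)-f(x,x)+(1-q)x\,D_{q,x}f(x,x)=0,
\end{equation*}
where the middle equality is the telescoping computation again and the last follows from the definition of $D_{q,x}f(x,x)$.
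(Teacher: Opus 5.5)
Your proof is correct and follows the paper's argument. Both add and subtract $\int_0^x f(qx,t)\,\d_q t$; the bulk term becomes $\int_0^x D_{q,x}f(x,t)\,\d_q t$, and the boundary term $\int_{qx}^{x} f(qx,t)\,\d_q t$ telescopes to $(1-q)x\,f(qx,x)$. You go further than the paper in writing out this telescoping and in deriving the second form through the symmetric split via $\int_0^{qx} f(x,t)\,\d_q t$, with a consistency check. The paper only says the boundary term is ``easy to see'' and calls the second formula equivalent.
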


\begin{proof}
    The $q$-derivative of $F(x)=\int_{0}^{x}f(x,t)\d_{q} t$ is given by
    \begin{align}\label{formula1}
    D_{q,x}F(x) & = \frac{\int_{0}^{x}f(x,t)\d_{q}t - \int_{0}^{qx}f(qx,t)\d_{q}t}{(1-q)x} \nonumber\\
    & = \int_{0}^{x}\frac{f(x,t) - f(qx,t)}{(1-q)x}\d_{q}t + 
    \frac{\int_{0}^{x}f(qx,t)\d_{q}t - \int_{0}^{qx}f(qx,t)\d_{q}t}{(1-q)x}.
    \end{align}
The integrand in the first term in \eqref{formula1} is equal to $D_{q,x}f(x,t)$ and it easy to see that the second term is equal to $f(qx,x)$, ending the proof.    
\end{proof}

\section{Classical Duhamel's principle}
The following variants of Duhamel's principle for which we provide the proofs, are commonly used. They can be found in \cite{Seb22}. For convenience of the reader, we recall them here.

\subsection{First order Duhamel's principle}
Let us consider the following Cauchy problem for the first order non-homogeneous linear evolution equation
\begin{equation}
    \left\lbrace
    \begin{array}{l}
    u_t(t,x)-Lu(t,x)=f(t,x) ,~~~(t,x)\in\left(0,\infty\right)\times \mathbb{R}^{d},\\
    u(0,x)=u_{0}(x),\,\,\, x\in\mathbb{R}^{d}, \label{Equation Duhamel 2}
    \end{array}
    \right.
\end{equation}
where $L$ is a linear differential operator that includes no time derivatives.

\begin{thm}\label{Thm Duhamel 2}
The solution to the Cauchy problem (\ref{Equation Duhamel 2}) is given by
\begin{equation}
    u(t,x)= w(t,x) + \int_0^t v(t,x;s)\d s,\label{Sol Duhamel 2}
\end{equation}
where $w(t,x)$ is the solution to the homogeneous problem
\begin{equation}
    \left\lbrace
    \begin{array}{l}
    w_t(t,x)-Lw(t,x)=0 ,~~~(t,x)\in\left(0,\infty\right)\times \mathbb{R}^{d},\\
    w(0,x)=u_{0}(x),\,\,\, x\in\mathbb{R}^{d},\label{Homog eqn Duhamel 2}
    \end{array}
    \right.
\end{equation}
and $v(t,x;s)$ solves the auxiliary Cauchy problem
\begin{equation}
    \left\lbrace
    \begin{array}{l}
    v_t(t,x;s)-Lv(t,x;s)=0 ,~~~(t,x)\in\left(s,\infty\right)\times \mathbb{R}^{d},\\
    v(s,x;s)=f(s,x),\,\,\, x\in\mathbb{R}^{d},\label{Aux eqn Duhamel 2}
    \end{array}
    \right.
\end{equation}
where $s$ is a time-like parameter varying over $\left(0,\infty\right)$.
\end{thm}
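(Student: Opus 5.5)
The plan is to verify directly that the function $u$ defined in \eqref{Sol Duhamel 2} satisfies both the equation and the initial condition in (\ref{Equation Duhamel 2}). Uniqueness is not claimed separately: "the solution" is read as "a solution given by this formula," or one appeals to uniqueness for the underlying problem. Set $V(t,x):=\int_0^t v(t,x;s)\,\d s$, so that $u=w+V$. By linearity of $\partial_t-L$ and the fact that $w$ solves (\ref{Homog eqn Duhamel 2}), it suffices to show two things: $V(0,x)=0$, and $V_t-LV=f$ on $(0,\infty)\times\mathbb{R}^d$.

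The initial condition is immediate. At $t=0$ the integral is over a degenerate interval, so $V(0,x)=0$, and hence $u(0,x)=w(0,x)=u_0(x)$.

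For the equation, I will differentiate $V$ with respect to $t$ using the classical Leibniz rule for integrals depending on a parameter. This is the continuous counterpart of Lemma \ref{lemma1}, and it is proved the same way: write the difference quotient $\frac{V(t+h,x)-V(t,x)}{h}$ and split it into
\[
\int_0^t \frac{v(t+h,x;s)-v(t,x;s)}{h}\,\d s + \frac1h\int_t^{t+h} v(t+h,x;s)\,\d s ,
\]
then let $h\to0$. This gives
\[
V_t(t,x)=v(t,x;t)+\int_0^t v_t(t,x;s)\,\d s = f(t,x)+\int_0^t v_t(t,x;s)\,\d s,
\]
where the last equality uses the initial condition $v(s,x;s)=f(s,x)$ in (\ref{Aux eqn Duhamel 2}) with $s=t$. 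Since $L$ contains no time derivatives and is linear, it commutes with the $s$-integral, so $LV(t,x)=\int_0^t Lv(t,x;s)\,\d s$. Subtracting and using $v_t-Lv=0$ for $t>s$ yields $V_t-LV=f$, as required.

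The main obstacle is justifying the two interchanges of limits: differentiating under the integral sign, and passing $L$ inside the integral. I will make these rigorous by assuming enough regularity. Specifically, I take $v$, $v_t$ and the derivatives of $v$ appearing in $L$ to be jointly continuous in $(t,x,s)$ on $\{0\le s\le t\}$, and $f$ to be continuous. Then on compact $t$-intervals, uniform continuity gives convergence of the first difference-quotient term to $\int_0^t v_t\,\d s$. For the second term, the mean value theorem for integrals together with continuity gives $\frac1h\int_t^{t+h}v(t+h,x;s)\,\d s\to v(t,x;t)$; the case $h<0$ is handled symmetrically. Finally, $L$ may be passed through the integral because it is a finite sum of coefficient-weighted spatial derivatives, each of which can be differentiated under the integral sign by the same continuity argument.
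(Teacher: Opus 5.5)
Your proposal is correct and follows the paper's route. You differentiate $\int_0^t v(t,x;s)\,\d s$ by the Leibniz rule, identify the boundary term $v(t,x;t)=f(t,x)$, pass $L$ through the $s$-integral, and read off $u(0,x)=w(0,x)=u_0(x)$. Your regularity hypotheses and the difference-quotient justification are additions the paper leaves implicit. One small correction: for $h<0$ the mirror split should be $\int_0^{t+h}\frac{v(t+h,x;s)-v(t,x;s)}{h}\,\d s+\frac{1}{|h|}\int_{t+h}^{t}v(t,x;s)\,\d s$, because $v(\cdot,x;s)$ is only defined for times $\ge s$.
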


\begin{proof}
Applying the derivative $\partial_{t}$ to $u$ in (\ref{Sol Duhamel 2}) we get
\begin{equation}
    \partial_t u(t,x)=\partial_t w(t,x) + v(t,x;t) + \int_0^t \partial_t v(t,x;s)\d s.\label{Duhamel proof 2.1}
\end{equation}
We note that $v(t,x;t)=f(t,x)$ by the initial condition in (\ref{Aux eqn Duhamel 2}).
For the spatial component $L$ we have
\begin{equation}
    L u(t,x)=L w(t,x) + \int_0^t L v(t,x;s)\d s,\label{Duhamel proof 2.2}
\end{equation}
since $L$ applies only to the variable $x$. Combining (\ref{Duhamel proof 2.1}) and (\ref{Duhamel proof 2.2}) and using that $w$ and $v$ are the solutions to (\ref{Homog eqn Duhamel 2}) and (\ref{Aux eqn Duhamel 2}) we arrive at
\begin{equation*}
    u_t(t,x) - Lu(t,x)= f(t,x).
\end{equation*}
Observing that $u(0,x)=u_0(x)$ from the initial condition in (\ref{Homog eqn Duhamel 2}) completes the proof.
\end{proof}

\subsection{Second order Duhamel's principle}
Let us consider the Cauchy problem for the second order non-homogeneous linear evolution equation
\begin{equation}
    \left\lbrace
    \begin{array}{l}
    u_{tt}(t,x)-Lu(t,x)=f(t,x) ,~~~(t,x)\in\left(0,\infty\right)\times \mathbb{R}^{d},\\
    u(0,x)=u_{0}(x),\,\,\, u_{t}(0,x)=u_{1}(x),\,\,\, x\in\mathbb{R}^{d}, \label{Equation Duhamel 3}
    \end{array}
    \right.
\end{equation}
for some linear partial differential operator $L$ over the space variable $x$.

\begin{thm}\label{Thm Duhamel 3}
The solution to the Cauchy problem (\ref{Equation Duhamel 3}) is given by
\begin{equation}
    u(t,x)= w(t,x) + \int_0^t v(t,x;s)\d s,\label{Sol Duhamel 3}
\end{equation}
where $w(t,x)$ is the solution to the homogeneous problem
\begin{equation}
    \left\lbrace
    \begin{array}{l}
    w_{tt}(t,x)-Lw(t,x)=0 ,~~~(t,x)\in\left(0,\infty\right)\times \mathbb{R}^{d},\\
    w(0,x)=u_{0}(x),\,\,\, w_{t}(0,x)=u_{1}(x),\,\,\, x\in\mathbb{R}^{d},\label{Homog eqn Duhamel 3}
    \end{array}
    \right.
\end{equation}
and $v(t,x;s)$ solves the auxiliary Cauchy problem
\begin{equation}
    \left\lbrace
    \begin{array}{l}
    v_{tt}(t,x;s)-Lv(t,x;s)=0 ,~~~(t,x)\in\left(s,\infty\right)\times \mathbb{R}^{d},\\
    v(s,x;s)=0,\,\,\, v_{t}(s,x;s)=f(s,x),\,\,\, x\in\mathbb{R}^{d},\label{Aux eqn Duhamel 3}
    \end{array}
    \right.
\end{equation}
where $s$ is a time-like parameter varying over $\left(0,\infty\right)$.
\end{thm}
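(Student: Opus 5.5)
We verify directly that the function $u$ in (\ref{Sol Duhamel 3}) satisfies the equation and both initial conditions in (\ref{Equation Duhamel 3}), in the same spirit as the proof of Theorem \ref{Thm Duhamel 2}. The tool is the Leibniz rule for differentiating an integral with variable upper limit, $\partial_t\int_0^t g(t,s)\d s = g(t,t)+\int_0^t \partial_t g(t,s)\d s$, applied twice. Throughout we assume enough regularity of $v$ in $(t,s)$, say $C^2$ in $t$ jointly continuous in $s$, to justify differentiating under the integral sign.

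First we compute one time derivative:
\begin{equation*}
\partial_t u(t,x)=\partial_t w(t,x)+v(t,x;t)+\int_0^t \partial_t v(t,x;s)\d s .
\end{equation*}
The boundary term $v(t,x;t)$ vanishes by the first initial condition $v(s,x;s)=0$ in (\ref{Aux eqn Duhamel 3}) with $s=t$. This is the key point where the second-order structure differs from the first-order case: the initial data of the auxiliary problem are arranged so that the first differentiation produces no boundary contribution.

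Next we differentiate once more:
\begin{equation*}
\partial_{tt} u(t,x)=\partial_{tt} w(t,x)+\partial_t v(t,x;s)\big|_{s=t}+\int_0^t \partial_{tt} v(t,x;s)\d s .
\end{equation*}
By the second initial condition in (\ref{Aux eqn Duhamel 3}), $\partial_t v(t,x;t)=f(t,x)$. Since $L$ acts only in $x$, we have $Lu=Lw+\int_0^t Lv(t,x;s)\d s$, exactly as in (\ref{Duhamel proof 2.2}). Subtracting and using $w_{tt}-Lw=0$ and $v_{tt}-Lv=0$ gives $u_{tt}-Lu=f$.

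Finally we check the initial data. At $t=0$ the integral is over an empty interval, so $u(0,x)=w(0,x)=u_0(x)$. From the formula for $\partial_t u$, the integral term also vanishes at $t=0$, so $u_t(0,x)=w_t(0,x)=u_1(x)$.

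The only step needing care is the second differentiation. There, the boundary term is $\partial_t v$ evaluated at $s=t$, and one must read it as the $t$-derivative of $v(\cdot,x;s)$ taken first and then evaluated on the diagonal $s=t$. It must not be read as the derivative of the composite function $t\mapsto v(t,x;t)$, which is identically zero. Keeping this order straight is what produces the forcing term $f$, and it is the main point to be written carefully.
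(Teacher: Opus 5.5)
Your proposal is correct and follows essentially the same route as the paper: two applications of the Leibniz rule, with the boundary term $v(t,x;t)=0$ vanishing on the first differentiation and $\partial_t v(t,x;s)\big|_{s=t}=f(t,x)$ appearing on the second. It then combines this with $Lu=Lw+\int_0^t Lv\,\d s$ and reads off $u(0,x)=u_0(x)$ and $u_t(0,x)=u_1(x)$ from the vanishing integrals at $t=0$. Your explicit regularity assumption and your remark on the order of evaluation of $\partial_t v$ on the diagonal are sensible clarifications that the paper leaves implicit.
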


\begin{proof}
As in Theorem \ref{Thm Duhamel 2}, we apply the components of the operator $\partial_{tt}-L$ separately to $u$ in (\ref{Sol Duhamel 3}). For the spatial component $L$ we simply have
\begin{equation}
    L u(t,x)=L w(t,x) + \int_0^t L v(t,x;s)\d s.\label{Duhamel proof 3.1}
\end{equation}
For the temporal component we get
\begin{equation}
    \partial_t u(t,x)=\partial_t w(t,x) + \int_0^t \partial_t v(t,x;s)\d s,\label{Duhamel proof 3.2}
\end{equation}
where we used the fact that $v(t,x;t)=0$ by the imposed initial condition in (\ref{Aux eqn Duhamel 3}). Differentiating (\ref{Duhamel proof 3.2}) again and noting that $\partial_t v(t,x;t) = f(t,x)$, we arrive at
\begin{equation}
    \partial_{tt} u(t,x)=\partial_{tt} w(t,x) + f(t,x) + \int_0^t \partial_{tt} v(t,x;s)\d s.\label{Duhamel proof 3.3}
\end{equation}
Combining (\ref{Duhamel proof 3.1}) and (\ref{Duhamel proof 3.3}) and using the fact that $w$ and $v$ solve (\ref{Homog eqn Duhamel 3}) and (\ref{Aux eqn Duhamel 3}) respectively, we arrive at
\begin{equation*}
    u_{tt}(t,x) - Lu(t,x)= f(t,x).
\end{equation*}
To conclude the proof, we just observe from (\ref{Sol Duhamel 3}) that $u(0,x)=w(0,x)=u_0(x)$ and from (\ref{Duhamel proof 3.2}) that $u_t(0,x)=\partial
_t w(0,x)=u_1(x)$.
\end{proof}

\subsection{$k$-order Duhamel's principle}
More generally, if we consider the Cauchy problem for the $k$-order non-homogeneous linear evolution equation
\begin{equation}\label{Equation Duhamel general}
        \left\lbrace
        \begin{array}{l}        \partial_{t}^{k}u(t,x) - Lu(t,x) = f(t,x) ,~~~(t,x)\in\left(0,\infty\right)\times \mathbb{R}^{d},\\
        \partial_{t}^{j}u(0,x)=u_{j}(x),\,\text{for}\quad j=0,\cdots,k-1, \quad x\in\mathbb{R}^{d},
        \end{array}
        \right.
\end{equation}
then, Duhamel's principle reads

\begin{thm}\label{Thm Duhamel general}
    The solution to the Cauchy problem \eqref{Equation Duhamel general} has the representation
    \begin{equation*}\label{Sol Duhamel general}
    u(t,x)= w(t,x) + \int_0^t v(t,\tau;\tau)\d \tau,
\end{equation*}
where $w(t,x)$ is the solution to the homogeneous problem
\begin{equation*}\label{Homog eqn Duhamel general}
    \left\lbrace
    \begin{array}{l}
    \partial_{t}^{k}w(t,x) - Lw(t,x)=0 ,~~~(t,x)\in\left(0,\infty\right)\times \mathbb{R}^{d},\\
    \partial_{t}^{j}w(0,x)=u_{j}(x),\,\text{for}\quad j=0,\cdots,k-1, \quad x\in\mathbb{R}^{d},
    \end{array}
    \right.
\end{equation*}
and $v(t,x;\tau)$ solves
\begin{equation*}\label{Aux eqn Duhamel general}
    \left\lbrace
    \begin{array}{l}
    \partial_{t}^{k}v(t,x;\tau) - Lv(t,x;\tau)=0 ,~~~(t,x)\in\left(\tau,\infty\right)\times \mathbb{R}^{d},\\
    \partial_{t}^{j}w(\tau,x;\tau)=0,\,\text{for}\quad j=0,\cdots,k-2,\,\,\, \partial_{t}^{k-1}w(\tau,x;\tau)=f(\tau,x),\,\,\, x\in\mathbb{R}^{d},
    \end{array}
    \right.
\end{equation*}
where $\tau\in \left(0,\infty\right)$.
\end{thm}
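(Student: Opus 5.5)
The plan is to follow the proofs of Theorems \ref{Thm Duhamel 2} and \ref{Thm Duhamel 3}. We verify directly that $u(t,x)=w(t,x)+\int_0^t v(t,x;\tau)\,\d\tau$ satisfies both the equation and the initial conditions. (The integrand in the statement, written $v(t,\tau;\tau)$, should read $v(t,x;\tau)$, and the auxiliary initial conditions are imposed on $v$, not on $w$. We read the statement this way.) Throughout we assume enough regularity of $v$ in $(t,\tau)$ to differentiate under the integral sign via the Leibniz rule:
\begin{equation*}
\partial_t\int_0^t g(t,\tau)\,\d\tau=g(t,t)+\int_0^t\partial_t g(t,\tau)\,\d\tau.
\end{equation*}

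The key step is an induction on $j$ proving the claim
\begin{equation*}
\partial_t^{j}u(t,x)=\partial_t^{j}w(t,x)+\int_0^t\partial_t^{j}v(t,x;\tau)\,\d\tau,\qquad j=0,1,\dots,k-1.
\end{equation*}
The case $j=0$ is the definition of $u$. For the passage from $j$ to $j+1$ with $j\le k-2$, the Leibniz rule produces the boundary term $\partial_t^{j}v(t,x;t)$. This term vanishes because $\partial_t^{j}v(\tau,x;\tau)=0$ for $j\le k-2$, evaluated at $\tau=t$.

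Differentiating once more at $j=k-1$, the boundary term is now $\partial_t^{k-1}v(t,x;t)=f(t,x)$. This gives
\begin{equation*}
\partial_t^{k}u(t,x)=\partial_t^{k}w(t,x)+f(t,x)+\int_0^t\partial_t^{k}v(t,x;\tau)\,\d\tau.
\end{equation*}
Since $L$ acts only in $x$ and is linear, it commutes with the $\tau$-integral:
\begin{equation*}
Lu=Lw+\int_0^t Lv(t,x;\tau)\,\d\tau.
\end{equation*}
Subtracting this from the previous display and using $\partial_t^k w-Lw=0$ and $\partial_t^k v-Lv=0$ yields $\partial_t^k u-Lu=f$.

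For the initial conditions, set $t=0$ in the claim for $j=0,\dots,k-1$. The integral over $[0,0]$ vanishes, so $\partial_t^{j}u(0,x)=\partial_t^{j}w(0,x)=u_j(x)$.

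The main obstacle is justifying the repeated differentiation under the integral and the interchange of $L$ with the integral. This needs $v$ and its $t$-derivatives up to order $k$, together with $Lv$, to be continuous jointly in $(t,\tau)$ for $0\le\tau\le t$. I would state this as a standing smoothness assumption, as in the earlier theorems, rather than derive it. Once it is granted, the proof is just the bookkeeping of boundary terms in the induction.
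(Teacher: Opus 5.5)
Your proof is correct and is essentially the paper's argument. The paper only says ``induction over $k$''; your induction on the derivative order $j$ at fixed $k$ spells out the same repeated-differentiation pattern as the proofs of Theorems \ref{Thm Duhamel 2} and \ref{Thm Duhamel 3}: the boundary terms vanish for $j\le k-2$ and produce $f$ at $j=k-1$. Your reading of the statement's typos (integrand $v(t,x;\tau)$, auxiliary data imposed on $v$ rather than $w$) is the right one.
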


\begin{proof}
    The proof follows by induction over $k$.
\end{proof}

\section{Duhamel's principle for systems of coupled equations}
In this section we present the proofs for analogous versions of Duhamel's principle for systems of coupled equations of order $1$, $2$, as well as for systems of mixed orders.
 
\subsection{Systems of coupled evolution equations of first order}
Let us consider the following system of non-homogeneous coupled evolution equations of first order:
\begin{equation}
    \left\lbrace
    \begin{array}{l}
    u_t(t,x) + L_1 u(t,x) + L_2 \theta(t,x) =f(t,x), \quad (t,x)\in\left(0,\infty\right)\times \mathbb{R}^d,\\
    \theta_t(t,x) + L_3 \theta(t,x) + L_4 u(t,x) =g(t,x),\quad (t,x)\in\left(0,\infty\right)\times \mathbb{R}^d,\\
    u(0,x)=u_{0}(x),\,\theta(0,x)=\theta_{0}(x),\quad x\in\mathbb{R}^d, \label{Equation Duhamel 4-syst}
    \end{array}
    \right.
\end{equation}
where $L_1$, $L_2$, $L_3$ and $L_4$ are linear differential operators acting only on the spatial variable $x$. Duhamel's principle reads:

\begin{thm}\label{Thm Duhamel 4-syst}
The solution to the system (\ref{Equation Duhamel 4-syst}) is given by
\begin{equation}\label{Sol Duhamel 4-syst}
\left\lbrace
\begin{array}{l}
    u(t,x)= v(t,x) + \int_0^t w(t,x;s)\d_{q} s,\\
    \theta(t,x)= \gamma(t,x) + \int_0^t \zeta(t,x;s)\d_{q} s,
\end{array}
\right.
\end{equation}
where $v(t,x)$ and $\gamma(t,x)$ are solutions to the system of homogeneous equations associated to \eqref{Equation Duhamel 4-syst}. That is
\begin{equation}\label{Homog eqn Duhamel 4-syst}
    \left\lbrace
    \begin{array}{l}
    v_t(t,x) + L_1 v(t,x) + L_2 \gamma(t,x) =0, \quad (t,x)\in\left(0,\infty\right)\times \mathbb{R}^d,\\
    \gamma_t(t,x) + L_3 \gamma(t,x) + L_4 v(t,x) =0,\quad (t,x)\in\left(0,\infty\right)\times \mathbb{R}^d,\\
    v(0,x)=u_{0}(x),\,\gamma(0,x)=\theta_{0}(x),\quad x\in\mathbb{R}^d,
    \end{array}
    \right.
\end{equation}
and $w(t,x;s)$, $\zeta(t,x;s)$ solve the auxiliary system of equations
\begin{equation}\label{Aux eqn Duhamel 4-syst}
    \left\lbrace
    \begin{array}{l}
    w_t(t,x;s) + L_1 w(t,x;s) + L_2 \zeta(t,x;s) = 0, \quad (t,x)\in\left(s,\infty\right)\times \mathbb{R}^d,\\
    \zeta_t(t,x;s) + L_3 \zeta(t,x;s) + L_4 w(t,x;s) = 0,\quad (t,x)\in\left(s,\infty\right)\times \mathbb{R}^d,\\
w(s,x;s)=f(s,x),\,\zeta(s,x;s)=g(s,x),\quad x\in\mathbb{R}^d,
    \end{array}
    \right.
\end{equation}
where $s$ is a time-like parameter varying over $\left(0,\infty\right)$.
\end{thm}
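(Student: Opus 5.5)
The plan mirrors the proof of Theorem \ref{Thm Duhamel 2}, applied componentwise to the pair $(u,\theta)$. The integrals in \eqref{Sol Duhamel 4-syst} are written with $\d_q s$, but the system \eqref{Equation Duhamel 4-syst} uses the ordinary derivative $\partial_t$. I read them as ordinary integrals $\d s$, consistent with the classical setting of this section; this is the point where care is needed. The key tool is the Leibniz rule for differentiating an integral whose upper limit and integrand both depend on $t$:
\begin{equation*}
\partial_t \int_0^t w(t,x;s)\,\d s = w(t,x;t) + \int_0^t \partial_t w(t,x;s)\,\d s,
\end{equation*}
and the same identity with $\zeta$ in place of $w$. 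We assume enough regularity of $w$ and $\zeta$ in $(t,s)$ to justify it.

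First, differentiate both lines of \eqref{Sol Duhamel 4-syst} in $t$. The initial conditions in \eqref{Aux eqn Duhamel 4-syst} give $w(t,x;t)=f(t,x)$ and $\zeta(t,x;t)=g(t,x)$, so
\begin{equation*}
u_t = v_t + f + \int_0^t w_t(t,x;s)\,\d s,\qquad \theta_t = \gamma_t + g + \int_0^t \zeta_t(t,x;s)\,\d s.
\end{equation*}
Second, the operators $L_1,\dots,L_4$ act only on $x$, so they commute with the $s$-integral. Hence $L_1 u = L_1 v + \int_0^t L_1 w\,\d s$, $L_2\theta = L_2\gamma + \int_0^t L_2\zeta\,\d s$, and analogously for $L_3\theta$ and $L_4 u$.

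Third, add these expressions. For the first equation,
\begin{equation*}
u_t + L_1u + L_2\theta = \bigl(v_t + L_1 v + L_2\gamma\bigr) + f + \int_0^t \bigl(w_t + L_1 w + L_2\zeta\bigr)(t,x;s)\,\d s .
\end{equation*}
The first bracket vanishes by \eqref{Homog eqn Duhamel 4-syst}. The integrand vanishes for $t>s$ by \eqref{Aux eqn Duhamel 4-syst}, since the integration runs over $s\in(0,t)$. This leaves exactly $f$. The second equation is handled identically, with the couplings $L_3,L_4$, and yields $g$.

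Finally, at $t=0$ the integrals are over a degenerate interval and vanish. Thus $u(0,x)=v(0,x)=u_0(x)$ and $\theta(0,x)=\gamma(0,x)=\theta_0(x)$.

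The only delicate step is the coupling. The auxiliary problem for $(w,\zeta)$ must be solved as a coupled system with the joint data $(f(s,\cdot),g(s,\cdot))$ at time $s$, not as two separate scalar problems. This is what lets the cross terms $L_2\zeta$ and $L_4 w$ cancel inside the integrand at the same parameter $s$. Beyond that, the argument is the scalar one done componentwise, together with the justification of differentiating under the integral.
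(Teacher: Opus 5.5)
Your proof is correct and is essentially the paper's argument: the paper writes the system as $\partial_t U + LU = F$ with $U=(u,\theta)^{T}$, $F=(f,g)^{T}$ and the matrix operator $L=\begin{pmatrix} L_1 & L_2\\ L_4 & L_3\end{pmatrix}$, then invokes Theorem~\ref{Thm Duhamel 2} (with $-L$ in place of $L$), and your computation is that theorem's Leibniz-rule proof carried out componentwise; the vector form is what makes your coupling remark automatic. Reading $\d_q s$ as an ordinary $\d s$ is also right, since the paper's proof reduces to the classical Theorem~\ref{Thm Duhamel 2}.
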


\begin{proof}
To write the system \eqref{Equation Duhamel 4-syst} in matrix form, we define
\begin{equation*}
    U(t,x):= \begin{pmatrix} u(t,x) \\ \theta(t,x) \end{pmatrix}.
\end{equation*}
Then, \eqref{Equation Duhamel 4-syst} can be rewritten as
\begin{equation}\label{matrix form 1-syst}
    \left\lbrace
    \begin{array}{l}
        \partial_t U(t,x) + L U(t,x) = F(t,x), \quad (t,x)\in\left(0,\infty\right)\times \mathbb{R}^d,\\
        U(0,x)=U_0 (x), \quad x\in\mathbb{R}^d,
    \end{array}
    \right.
\end{equation}
where $L$ is the differential matrix operator, defined by
\begin{equation*}
    L= \begin{pmatrix}
    L_1 & L_2\\
    L_4 & L_3
    \end{pmatrix},
\end{equation*}
and
\begin{equation*}
F(t,x) = \begin{pmatrix} f(t,x) \\ g(t,x) \end{pmatrix},\quad
    U_0(x)= \begin{pmatrix} u_0(x) \\ \theta_0(x) \end{pmatrix},
\end{equation*}
for $(t,x)\in\left(0,\infty\right)\times \mathbb{R}^d$. By applying Theorem \ref{Thm Duhamel 2} to \eqref{matrix form 1-syst}, we get the desired representation for the solution. This completes the proof.
\end{proof}

\subsection{Systems of coupled evolution equations of second order}
Let us consider the following system of non-homogeneous coupled evolution equations of first order:
\begin{equation}
    \left\lbrace
    \begin{array}{l}
    u_{tt}(t,x) + L_1 u(t,x) + L_2 \theta(t,x) =f(t,x), \quad (t,x)\in\left(0,\infty\right)\times \mathbb{R}^d,\\
    \theta_{tt}(t,x) + L_3 \theta(t,x) + L_4 u(t,x) =g(t,x),\quad (t,x)\in\left(0,\infty\right)\times \mathbb{R}^d,\\
    u(0,x)=u_{0}(x),\,u_t(0,x)=u_{1}(x), \quad x\in\mathbb{R}^d,\\
    \theta(0,x)=\theta_{0}(x),\,\theta_t(0,x)=\theta_{1}(x),\quad x\in\mathbb{R}^d, \label{Equation Duhamel 5-syst}
    \end{array}
    \right.
\end{equation}
where $L_1$, $L_2$, $L_3$ and $L_4$ are linear differential operators acting only on the spatial variable $x$. Duhamel's principle in this case, reads as follows:

\begin{thm}\label{Thm Duhamel 5-syst}
The solution to the system (\ref{Equation Duhamel 5-syst}) is given by
\begin{equation}\label{Sol Duhamel 5-syst}
\left\lbrace
\begin{array}{l}
    u(t,x)= v(t,x) + \int_0^t w(t,x;s)\d_{q} s,\\
    \theta(t,x)= \gamma(t,x) + \int_0^t \zeta(t,x;s)\d_{q} s,
\end{array}
\right.
\end{equation}
where $v(t,x)$ and $\gamma(t,x)$ are solutions to the system of homogeneous equations associated to \eqref{Equation Duhamel 5-syst}
\begin{equation}\label{Homog eqn Duhamel 5-syst}
    \left\lbrace
    \begin{array}{l}
    v_{tt}(t,x) + L_1 v(t,x) + L_2 \gamma(t,x) =0, \quad (t,x)\in\left(0,\infty\right)\times \mathbb{R}^d,\\
    \gamma_{tt}(t,x) + L_3 \gamma(t,x) + L_4 v(t,x) =0,\quad (t,x)\in\left(0,\infty\right)\times \mathbb{R}^d,\\
    v(0,x)=u_{0}(x),\,v_t(0,x)=\theta_{0}(x),\quad x\in\mathbb{R}^d,\\
    \gamma(0,x)=u_{0}(x),\,\gamma_t(0,x)=\theta_{0}(x),\quad x\in\mathbb{R}^d,
    \end{array}
    \right.
\end{equation}
and $w(t,x;s)$, $\zeta(t,x;s)$ solve the auxiliary system of equations
\begin{equation}\label{Aux eqn Duhamel 5-syst}
    \left\lbrace
    \begin{array}{l}
    w_{tt}(t,x;s) + L_1 w(t,x;s) + L_2 \zeta(t,x;s) = 0, \quad (t,x)\in\left(s,\infty\right)\times \mathbb{R}^d,\\
    \zeta_{tt}(t,x;s) + L_3 \zeta(t,x;s) + L_4 w(t,x;s) = 0,\quad (t,x)\in\left(s,\infty\right)\times \mathbb{R}^d,\\
    w(s,x;s)=0,\,w_t(s,x;s)=f(s,x),\quad x\in\mathbb{R}^d,\\
    \zeta(s,x;s)=0,\,\zeta_t(s,x;s)=g(s,x),\quad x\in\mathbb{R}^d,
    \end{array}
    \right.
\end{equation}
where $s$ is a time-like parameter varying over $\left(0,\infty\right)$.
\end{thm}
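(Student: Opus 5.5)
Following the proof of Theorem \ref{Thm Duhamel 4-syst}, we reduce the system to a single vector-valued equation and then invoke the scalar second order principle, Theorem \ref{Thm Duhamel 3}. First set
\begin{equation*}
    U(t,x):= \begin{pmatrix} u(t,x) \\ \theta(t,x) \end{pmatrix},\quad
    F(t,x) = \begin{pmatrix} f(t,x) \\ g(t,x) \end{pmatrix},\quad
    U_0= \begin{pmatrix} u_0 \\ \theta_0 \end{pmatrix},\quad
    U_1= \begin{pmatrix} u_1 \\ \theta_1 \end{pmatrix},
\end{equation*}
and let $L$ be the matrix operator with rows $(L_1, L_2)$ and $(L_4, L_3)$. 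Then \eqref{Equation Duhamel 5-syst} becomes
\begin{equation*}
    \partial_{tt}U + LU = F, \quad U(0,x)=U_0(x),\quad \partial_t U(0,x)=U_1(x).
\end{equation*}
This is of the form \eqref{Equation Duhamel 3} with $-L$ in place of $L$. The operator $-L$ is linear and acts only in $x$.

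Next we check that the proof of Theorem \ref{Thm Duhamel 3} carries over to vector-valued functions. That argument uses only three facts, each of which holds componentwise:
\begin{itemize}
\item linearity of the spatial operator;
\item the fact that the spatial operator commutes with $\int_0^t \cdot \,\d s$;
\item the Leibniz rule $\partial_t\int_0^t V(t,x;s)\,\d s = V(t,x;t)+\int_0^t \partial_t V(t,x;s)\,\d s$.
\end{itemize}
So Theorem \ref{Thm Duhamel 3} applies verbatim to $U$.

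We then define $V=(v,\gamma)^T$ as the solution of the homogeneous matrix problem with data $(U_0,U_1)$. We define $W(\cdot;s)=(w,\zeta)^T$ as the solution of the auxiliary problem with $W(s,x;s)=0$ and $\partial_tW(s,x;s)=F(s,x)$. Written out in components, these are exactly \eqref{Homog eqn Duhamel 5-syst} and \eqref{Aux eqn Duhamel 5-syst}. The conclusion $U=V+\int_0^t W\,\d s$ is then \eqref{Sol Duhamel 5-syst}, read componentwise.

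For completeness we would also repeat the two-step differentiation directly. Differentiating once, the boundary term $W(t,x;t)$ vanishes. Differentiating again, the boundary term is $\partial_tW(t,x;t)=F(t,x)$. Combining with $LU=LV+\int_0^t LW\,\d s$ gives $\partial_{tt}U+LU=F$. The initial conditions follow by setting $t=0$.

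We expect the main obstacle to be bookkeeping in the statement rather than the analysis. The homogeneous problem \eqref{Homog eqn Duhamel 5-syst} as printed assigns data $u_0,\theta_0$ to both $v$ and $\gamma$. For the initial conditions of \eqref{Equation Duhamel 5-syst} to hold, these must read $v(0)=u_0$, $v_t(0)=u_1$, $\gamma(0)=\theta_0$, $\gamma_t(0)=\theta_1$, and we would state the proof with these corrected data. Likewise, the integrals $\d_q s$ in \eqref{Sol Duhamel 5-syst} must be read as ordinary integrals $\d s$, since the equations involve the classical $\partial_t$.
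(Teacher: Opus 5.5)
Your proof is correct, but it uses a different reduction from the paper's. The paper does not keep the equation second order. It passes to the phase-space vector $U=(u,u_t,\theta,\theta_t)^T$ and writes the system as a first-order problem $\partial_t U+LU=F$. Here $L$ is the $4\times 4$ operator matrix with rows $(0,-1,0,0)$, $(L_1,0,L_2,0)$, $(0,0,0,-1)$, $(L_4,0,L_3,0)$, the forcing is $F=(0,f,0,g)^T$, and the data are $U_0=(u_0,u_1,\theta_0,\theta_1)^T$. It then invokes the first-order principle, Theorem \ref{Thm Duhamel 2}.

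In that route, $w$ and $\zeta$ are the first and third components of the first-order auxiliary solution. The single datum $(0,f(s,\cdot),0,g(s,\cdot))^T$ at $t=s$ encodes $w(s)=0$, $w_t(s)=f(s)$, $\zeta(s)=0$, $\zeta_t(s)=g(s)$. The paper leaves this identification implicit, and also the fact that Theorem \ref{Thm Duhamel 2} holds for vector-valued unknowns.

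Your route keeps the $2\times 2$ operator matrix and applies the second-order principle, Theorem \ref{Thm Duhamel 3}, in vector form. The homogeneous and auxiliary problems of the statement then come out verbatim, with no unpacking of components. You also justify explicitly why the scalar proof transfers: linearity, commuting the spatial operator with the $s$-integral, and the Leibniz rule, all componentwise.

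The paper's route buys uniformity. Everything reduces to the one first-order principle, and the same augmentation handles the mixed-order system of Theorem \ref{Thm Duhamel 6} and systems of any order. Your approach does not apply directly there, since there is no common order $k$ for a vector equation $\partial_t^k U+LU=F$.

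Your corrections to the statement are right and match the paper's own proof. Its data vector $U_0=(u_0,u_1,\theta_0,\theta_1)^T$ forces $v(0)=u_0$, $v_t(0)=u_1$, $\gamma(0)=\theta_0$, $\gamma_t(0)=\theta_1$. The integrals $\d_q s$ must likewise be read as ordinary $\d s$.
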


\begin{proof}
    By denoting
    \begin{equation*}
    U(t,x):= \begin{pmatrix} u(t,x) \\
    u_t(t,x) \\
    \theta(t,x) \\
    \theta_t(t,x)
    \end{pmatrix},
    \end{equation*}
    the system of equations \eqref{Equation Duhamel 5-syst} rewrites as follows
    \begin{equation}\label{matrix form 2-syst}
    \left\lbrace
    \begin{array}{l}
        \partial_t U(t,x) + L U(t,x) = F(t,x), \quad (t,x)\in\left(0,\infty\right)\times \mathbb{R}^d,\\
        U(0,x)=U_0 (x), \quad x\in\mathbb{R}^d,
    \end{array}
    \right.
\end{equation}
    where $L$ is the differential matrix operator, defined by
\begin{equation*}
    L= \begin{pmatrix}
    0 & -1 & 0 & 0\\
    L_1 & 0 & L_2 & 0\\
    0 & 0 & 0 & -1\\
    L_4 & 0 & L_3 & 0
    \end{pmatrix},
\end{equation*}
and
\begin{equation*}
    F(t,x)= \begin{pmatrix} 0 \\
    f(t,x) \\
    0\\
    g(t,x)
    \end{pmatrix}, \quad
    U_0(x)= \begin{pmatrix} u_0(x) \\
    u_1(x) \\
    \theta_0(x)\\
    \theta_1(x)
    \end{pmatrix},
\end{equation*}
for $(t,x)\in\left(0,\infty\right)\times \mathbb{R}^d$. The representation for the solution follows by applying Theorem \ref{Thm Duhamel 2}, ending the proof.
\end{proof}

\subsection{Systems of coupled evolution equations of mixed orders}
Let us now consider the following system of coupled evolution equations of mixed orders:
\begin{equation}
    \left\lbrace
    \begin{array}{l}
    u_{tt}(t,x) + L_1 u(t,x) + L_2 \theta(t,x) =f(t,x), \quad (t,x)\in\left(0,\infty\right)\times \mathbb{R}^d,\\
    \theta_{t}(t,x) + L_3 \theta(t,x) + L_4 u(t,x) =g(t,x),\quad (t,x)\in\left(0,\infty\right)\times \mathbb{R}^d,\\
    u(0,x)=u_{0}(x),\,u_t(0,x)=u_{1}(x), \quad x\in\mathbb{R}^d,\\
    \theta(0,x)=\theta_{0}(x),\quad x\in\mathbb{R}^d, \label{Equation Duhamel 6-syst}
    \end{array}
    \right.
\end{equation}
where $L_1$, $L_2$, $L_3$ and $L_4$ are linear differential operators acting only on the spatial variable $x$. Duhamel's principle states that:

\begin{thm}\label{Thm Duhamel 6}
The solution to the system (\ref{Equation Duhamel 6-syst}) is given by
\begin{equation}\label{Sol Duhamel 6-syst}
\left\lbrace
\begin{array}{l}
    u(t,x)= v(t,x) + \int_0^t w(t,x;s)\d_{q} s,\\
    \theta(t,x)= \gamma(t,x) + \int_0^t \zeta(t,x;s)\d_{q} s,
\end{array}
\right.
\end{equation}
where $v(t,x)$ and $\gamma(t,x)$ are solutions to the system of homogeneous equations associated to \eqref{Equation Duhamel 6-syst}
\begin{equation}\label{Homog eqn Duhamel 6-syst}
    \left\lbrace
    \begin{array}{l}
    v_{tt}(t,x) + L_1 v(t,x) + L_2 \gamma(t,x) =0, \quad (t,x)\in\left(0,\infty\right)\times \mathbb{R}^d,\\
    \gamma_{t}(t,x) + L_3 \gamma(t,x) + L_4 v(t,x) =0,\quad (t,x)\in\left(0,\infty\right)\times \mathbb{R}^d,\\
    v(0,x)=u_{0}(x),\,v_t(0,x)=\theta_{0}(x),\quad x\in\mathbb{R}^d,\\
    \gamma(0,x)=\theta_{0}(x),\quad x\in\mathbb{R}^d,
    \end{array}
    \right.
\end{equation}
and $w(t,x;s)$, $\zeta(t,x;s)$ solve the auxiliary system of equations
\begin{equation}\label{Aux eqn Duhamel 6-syst}
    \left\lbrace
    \begin{array}{l}
    w_{tt}(t,x;s) + L_1 w(t,x;s) + L_2 \zeta(t,x;s) = 0, \quad (t,x)\in\left(s,\infty\right)\times \mathbb{R}^d,\\
    \zeta_t(t,x;s) + L_3 \zeta(t,x;s) + L_4 w(t,x;s) = 0,\quad (t,x)\in\left(s,\infty\right)\times \mathbb{R}^d,\\
    w(s,x;s)=0,\,w_t(s,x;s)=f(s,x),\quad x\in\mathbb{R}^d,\\
    \zeta(s,x;s)=g(s,x),\quad x\in\mathbb{R}^d,
    \end{array}
    \right.
\end{equation}
where $s$ is a time-like parameter varying over $\left(0,\infty\right)$.
\end{thm}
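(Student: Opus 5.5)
I will follow the same strategy as in Theorems \ref{Thm Duhamel 4-syst} and \ref{Thm Duhamel 5-syst}: reduce the mixed-order system to a single first-order matrix evolution equation and then invoke Theorem \ref{Thm Duhamel 2}. Since $u$ is governed by a second-order equation and $\theta$ by a first-order one, the natural state vector has three components:
\begin{equation*}
    U(t,x):= \begin{pmatrix} u(t,x) \\ u_t(t,x) \\ \theta(t,x) \end{pmatrix}.
\end{equation*}
With this choice, \eqref{Equation Duhamel 6-syst} becomes $\partial_t U + LU = F$, $U(0,x)=U_0(x)$, where
\begin{equation*}
    L= \begin{pmatrix}
    0 & -1 & 0\\
    L_1 & 0 & L_2\\
    L_4 & 0 & L_3
    \end{pmatrix},\quad
    F(t,x)= \begin{pmatrix} 0 \\ f(t,x) \\ g(t,x) \end{pmatrix},\quad
    U_0(x)= \begin{pmatrix} u_0(x) \\ u_1(x) \\ \theta_0(x) \end{pmatrix}.
\end{equation*}
First I will check this reformulation row by row. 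The first row gives the identity $\partial_t u - u_t = 0$. The second row gives $u_{tt}+L_1u+L_2\theta=f$. The third row gives $\theta_t + L_4 u + L_3\theta = g$. The operator $L$ acts only in $x$ and contains no time derivatives, so Theorem \ref{Thm Duhamel 2} applies. Its proof only uses linearity and differentiation under the integral sign, so it carries over verbatim to vector-valued functions.

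Theorem \ref{Thm Duhamel 2} then gives $U = V + \int_0^t W(t,x;s)\,\d s$. Here $V$ solves the homogeneous matrix problem with $V(0)=U_0$, and $W$ solves the homogeneous matrix problem on $(s,\infty)$ with $W(s,x;s)=F(s,x)=(0,f(s,x),g(s,x))^T$.

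The remaining step is to translate back to components. I write $V=(v,\tilde v,\gamma)^T$. The first row forces $\tilde v = v_t$, and the other two rows then give exactly \eqref{Homog eqn Duhamel 6-syst}. The initial data are $v(0)=u_0$, $v_t(0)=u_1$ and $\gamma(0)=\theta_0$. (The statement writes $v_t(0,x)=\theta_0(x)$, which I read as a typo for $u_1$.) Similarly, I write $W=(w,\tilde w,\zeta)^T$, so that $\tilde w=w_t$. The initial condition $W(s)=F(s)$ then yields $w(s,x;s)=0$, $w_t(s,x;s)=f(s,x)$ and $\zeta(s,x;s)=g(s,x)$, which is precisely \eqref{Aux eqn Duhamel 6-syst}. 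Taking the first and third components of $U$ gives the representation \eqref{Sol Duhamel 6-syst}, with the integral understood as the ordinary $\d s$ integral as in Theorem \ref{Thm Duhamel 2}.

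The only delicate point is the consistency of the second component. The second component of $U$ is $u_t$, but the representation formula returns it as $\tilde v + \int_0^t \tilde w\,\d s = v_t + \int_0^t w_t\,\d s$. This must agree with $\partial_t$ of the first component, $\partial_t\bigl(v+\int_0^t w\,\d s\bigr)$. The two coincide because $w(t,x;t)=0$, exactly as in the computation \eqref{Duhamel proof 3.2}. Checking this confirms that the block reduction is faithful, and the proof is complete.
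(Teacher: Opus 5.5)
Your proposal is correct and follows the paper's proof essentially verbatim: the same state vector $U=(u,u_t,\theta)^T$, the same matrix operator $L$, forcing $F=(0,f,g)^T$ and data $U_0=(u_0,u_1,\theta_0)^T$, followed by an application of Theorem \ref{Thm Duhamel 2}. Your componentwise translation back fills in details the paper leaves implicit, including reading $v_t(0,x)=\theta_0(x)$ as a typo for $u_1(x)$ and $\d_q s$ as $\d s$; your final consistency check is correct but redundant, since the first row of the matrix system already forces the second component of $U$ to equal $\partial_t$ of the first.
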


\begin{proof}
    The proof follows by writing the system \eqref{Equation Duhamel 6-syst} in the following matrix form
    \begin{equation}\label{matrix form 3-syst}
    \left\lbrace
    \begin{array}{l}
        \partial_t U(t,x) + L U(t,x) = F(t,x), \quad (t,x)\in\left(0,\infty\right)\times \mathbb{R}^d,\\
        U(0,x)=U_0 (x), \quad x\in\mathbb{R}^d,
    \end{array}
    \right.
\end{equation}
    where
    \begin{equation*}
    U(t,x):= \begin{pmatrix} u(t,x) \\
    u_t(t,x) \\
    \theta(t,x)
    \end{pmatrix},
    \end{equation*}
    $L$ is the differential matrix operator, defined by
\begin{equation*}
    L= \begin{pmatrix}
    0 & -1 & 0 \\
    L_1 & 0 & L_2 \\
    L_4 & 0 & L_3
    \end{pmatrix},
\end{equation*}
and
\begin{equation*}
    F(t,x)= \begin{pmatrix} 0 \\
    f(t,x) \\
    g(t,x)
    \end{pmatrix}, \quad
    U_0(x)= \begin{pmatrix} u_0(x) \\
    u_1(x) \\
    \theta_0(x)
    \end{pmatrix},
\end{equation*}
for $(t,x)\in\left(0,\infty\right)\times \mathbb{R}^d$, and by applying Theorem \ref{Thm Duhamel 2}.
\end{proof}

\begin{rem}
    Duhamel's principle was stated for systems of coupled evolution equations of orders $1$ and $2$. This can be clearly extended to systems of equations of any order.
\end{rem}

\section{$q$-analogue of Duhamel's principle}
In this section we give the proof for the $q$-analogue of Duhamel's principle for $q$-evolution equations of order $k\geq 1$.

\subsection{First order Duhamel's principle for $q$-evolution equations}
Let us consider the following Cauchy problem for the first order non-homogeneous linear $q$-evolution equation generated by Jackson's operator:
\begin{equation}
    \left\lbrace
    \begin{array}{l}
    D_{q,t}u(t,x)-L_{q,x}u(t,x)=f(t,x) ,~~~(t,x)\in\left(0,\infty\right)_q\times \mathbb{R}_q,\\
    u(0,x)=u_{0}(x),\,\,\, x\in\mathbb{R}_q, \label{Equation Duhamel 4}
    \end{array}
    \right.
\end{equation}
where $L_{q,x}$ is a linear $q$-differential operator that involves no time derivatives. Then, Duhamel's principle reads:

\begin{thm}\label{Thm Duhamel 4}
The solution to the Cauchy problem (\ref{Equation Duhamel 4}) is given by
\begin{equation}
    u(t,x)= w(t,x) + \int_0^t v(t,x;s)\d_{q} s,\label{Sol Duhamel 4}
\end{equation}
where $w(t,x)$ is the solution to the homogeneous problem
\begin{equation}
    \left\lbrace
    \begin{array}{l}
    D_{q,t}w(t,x)-L_{q,x}w(t,x)=0 ,~~~(t,x)\in\left(0,\infty\right)_q\times \mathbb{R}_q,\\
    w(0,x)=u_{0}(x),\,\,\, x\in\mathbb{R}_q,\label{Homog eqn Duhamel 4}
    \end{array}
    \right.
\end{equation}
and $v(t,x;s)$ solves the auxiliary Cauchy problem
\begin{equation}
    \left\lbrace
    \begin{array}{l}
    D_{q,t}v(t,x;s)-L_{q,x}v(t,x;s)=0 ,~~~(t,x)\in\left(s,\infty\right)_q\times \mathbb{R}_q,\\
    v(qs,x;s)=f(s,x),\,\,\, x\in\mathbb{R}_q,\label{Aux eqn Duhamel 4}
    \end{array}
    \right.
\end{equation}
where $s$ is a time-like parameter varying over $\left(0,\infty\right)_q$.
\end{thm}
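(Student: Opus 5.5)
We mimic the proof of Theorem \ref{Thm Duhamel 2}, replacing the classical Leibniz rule by its $q$-version, Lemma \ref{lemma1}. We apply $D_{q,t}$ and $L_{q,x}$ separately to the representation \eqref{Sol Duhamel 4} and compare the results.

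For the time component, linearity gives
\begin{equation*}
D_{q,t}u(t,x)=D_{q,t}w(t,x)+D_{q,t}\int_0^t v(t,x;s)\,\d_q s .
\end{equation*}
We apply Lemma \ref{lemma1} with $x$ frozen, taking $F(t)=\int_0^t v(t,x;s)\,\d_q s$, so that $f(t,s)=v(t,x;s)$. This gives
\begin{equation*}
D_{q,t}\int_0^t v(t,x;s)\,\d_q s = v(qt,x;t)+\int_0^t D_{q,t}v(t,x;s)\,\d_q s .
\end{equation*}
The boundary term is evaluated at $(qt,t)$, not at $(t,t)$. This is exactly why the auxiliary problem \eqref{Aux eqn Duhamel 4} imposes its initial condition at time $qs$. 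With $s=t$ that condition reads $v(qt,x;t)=f(t,x)$, so the boundary term equals the forcing.

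For the spatial component, $L_{q,x}$ acts only on $x$. It is linear and commutes with the Jackson integral in $s$, since that integral is a weighted series $(1-q)t\sum_m q^m v(t,x;tq^m)$. Hence
\begin{equation*}
L_{q,x}u(t,x)=L_{q,x}w(t,x)+\int_0^t L_{q,x}v(t,x;s)\,\d_q s .
\end{equation*}
Subtracting the two identities and using \eqref{Homog eqn Duhamel 4} and \eqref{Aux eqn Duhamel 4} yields
\begin{equation*}
D_{q,t}u-L_{q,x}u=f(t,x).
\end{equation*}
Finally, the Jackson integral over $[0,0]$ vanishes, so $u(0,x)=w(0,x)=u_0(x)$.

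The main obstacle is the domain bookkeeping in the last step. The auxiliary equation holds for $t\in(s,\infty)_q$, but the Jackson sum samples $s=tq^m\in(0,t]$. The case $s=t$ therefore requires $D_{q,t}v(t,x;t)$, which involves $v(qt,x;t)$, a point just below $s$. I would handle this by interpreting the auxiliary problem as posed on $t\ge qs$, so that the $q$-derivative at $t=s$ is defined and the equation holds there. I would also assume enough decay, or absolute convergence of the series, to justify exchanging $L_{q,x}$ and $D_{q,t}$ with the infinite Jackson sum, as implicitly done in Lemma \ref{lemma1}.
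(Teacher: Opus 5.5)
Your proposal is correct and follows the paper's proof step for step: Lemma~\ref{lemma1} for the time component, $L_{q,x}$ passing through the Jackson integral, and the vanishing of $\int_0^0$ for the initial condition. Your boundary term $v(qt,x;t)$ paired with $\int_0^t$ is the consistent reading of Lemma~\ref{lemma1} (the paper's display \eqref{Duhamel proof 4.1} writes $v(t,x;t)$ there before invoking $v(qt,x;t)=f(t,x)$), and your observation that the auxiliary equation must also hold at $t=s$ makes explicit a point the paper leaves unstated.
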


\begin{proof}
Applying $D_{q,t}$ to $u$ in (\ref{Sol Duhamel 4}) and using Lemma \ref{lemma1}, we get
\begin{equation}
    D_{q,t} u(t,x)=D_{q,t} w(t,x) + v(t,x;t) + \int_0^t D_{q,t} v(t,x;s)\d_{q}s.\label{Duhamel proof 4.1}
\end{equation}
We note that $v(qt,x;t)=f(t,x)$ by the initial condition in (\ref{Aux eqn Duhamel 4}).
For the spatial component $L_{q,x}$ we simply have
\begin{equation}
    L_{q,x} u(t,x)=L_{q,x} w(t,x) + \int_0^t L_{q,x} v(t,x;s)\d_q s,\label{Duhamel proof 4.2}
\end{equation}
as $L_{q,x}$ applies only to the variable $x$. Combining (\ref{Duhamel proof 4.1}) and (\ref{Duhamel proof 4.2}) and using that $w$ and $v$ are the solutions to (\ref{Homog eqn Duhamel 4}) and (\ref{Aux eqn Duhamel 4}) we arrive at
\begin{equation*}
    D_{q,t}u(t,x) - L_{q,x}u(t,x)= f(t,x).
\end{equation*}
Observing that $u(0,x)=u_0(x)$ from the initial condition in (\ref{Homog eqn Duhamel 4}) completes the proof.
\end{proof}

\subsection{Second order Duhamel's principle for $q$-evolution equations}
Let us consider the Cauchy problem for the second order non-homogeneous linear $q$-evolution equation
\begin{equation}
    \left\lbrace
    \begin{array}{l}
    D_{q,tt}u(t,x)-L_{q,x}u(t,x)=f(t,x) ,~~~(t,x)\in\left(0,\infty\right)_q\times \mathbb{R}_q,\\
    u(0,x)=u_{0}(x),\,\,\, D_{q,t}u(0,x)=u_{1}(x),\,\,\, x\in\mathbb{R}_q, \label{Equation Duhamel 5}
    \end{array}
    \right.
\end{equation}
for some linear $q$-differential operator $L_{q,x}$ over the space variable $x$.

\begin{thm}\label{Thm Duhamel 5}
The solution to the Cauchy problem (\ref{Equation Duhamel 5}) is given by
\begin{equation}
    u(t,x)= w(t,x) + \int_0^t v(t,x;s)\d_{q} s,\label{Sol Duhamel 5}
\end{equation}
where $w(t,x)$ is the solution to the homogeneous problem
\begin{equation}
    \left\lbrace
    \begin{array}{l}
    D_{q,tt}w(t,x)-L_{q,x}w(t,x)=0 ,~~~(t,x)\in\left(0,\infty\right)_q\times \mathbb{R}_q,\\
    w(0,x)=u_{0}(x),\,\,\, D_{q,t}w(0,x)=u_{1}(x),\,\,\, x\in\mathbb{R}_q,\label{Homog eqn Duhamel 5}
    \end{array}
    \right.
\end{equation}
and $v(t,x;s)$ solves the auxiliary Cauchy problem
\begin{equation}
    \left\lbrace
    \begin{array}{l}
    D_{q,tt}v(t,x;s)-L_{q,x}v(t,x;s)=0 ,~~~(t,x)\in\left(s,\infty\right)_q\times \mathbb{R}_q,\\
    v(qs,x;s)=0,\,\,\, D_{q,t}v(qs,x;s)=f(s,x),\,\,\, x\in\mathbb{R}_q,\label{Aux eqn Duhamel 5}
    \end{array}
    \right.
\end{equation}
where $s$ is a time-like parameter varying over $\left(0,\infty\right)_q$.
\end{thm}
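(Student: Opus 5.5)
The plan is to repeat the argument of Theorem \ref{Thm Duhamel 3}, with each classical differentiation under the integral sign replaced by Lemma \ref{lemma1}. The shifted initial time $qs$ in \eqref{Aux eqn Duhamel 5} is chosen exactly so that the boundary terms produced by the lemma are the ones we need. We treat the spatial and temporal parts of $D_{q,tt}-L_{q,x}$ separately.

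For the spatial part, $L_{q,x}$ acts only on $x$ and is linear. It therefore commutes with the Jackson integral in $s$, which is a (convergent) series in the values $v(t,x;tq^m)$. This gives
\begin{equation*}
L_{q,x}u(t,x)=L_{q,x}w(t,x)+\int_0^t L_{q,x}v(t,x;s)\,\d_q s .
\end{equation*}

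For the temporal part, we apply Lemma \ref{lemma1} to $F(t)=\int_0^t v(t,x;s)\,\d_q s$, in the form $D_qF(t)=v(qt,x;t)+\int_0^t D_{q,t}v(t,x;s)\,\d_q s$. The first condition $v(qs,x;s)=0$ in \eqref{Aux eqn Duhamel 5} kills the boundary term, so
\begin{equation*}
D_{q,t}u(t,x)=D_{q,t}w(t,x)+\int_0^t D_{q,t}v(t,x;s)\,\d_q s .
\end{equation*}
We then apply Lemma \ref{lemma1} a second time, now to $G(t)=\int_0^t D_{q,t}v(t,x;s)\,\d_q s$. Its boundary term is $D_{q,t}v(qt,x;t)$, which equals $f(t,x)$ by the second condition in \eqref{Aux eqn Duhamel 5}. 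This yields
\begin{equation*}
D_{q,tt}u(t,x)=D_{q,tt}w(t,x)+f(t,x)+\int_0^t D_{q,tt}v(t,x;s)\,\d_q s .
\end{equation*}
Subtracting the spatial identity and using that $w$ and $v$ solve \eqref{Homog eqn Duhamel 5} and \eqref{Aux eqn Duhamel 5} gives $D_{q,tt}u-L_{q,x}u=f$ on $(0,\infty)_q\times\mathbb{R}_q$.

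For the initial data, $\int_0^0(\cdot)\,\d_q s=0$, so $u(0,x)=w(0,x)=u_0(x)$. From the formula for $D_{q,t}u$, the integral term also vanishes at $t=0$, so $D_{q,t}u(0,x)=D_{q,t}w(0,x)=u_1(x)$.

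The main obstacle I expect is justifying the two uses of Lemma \ref{lemma1} in this setting. The lemma needs the evaluations $v(qt,x;t)$ and $D_{q,t}v(qt,x;t)$. The latter involves $v$ at the points $qt$ and $q^2t$, and the point $q^2t$ lies below $qs$ when $s=t$. So one must make precise that $v(\cdot,x;s)$ is defined on the $q$-lattice from $qs$ onward, with $D_{q,t}v(qs,x;s)$ read as the prescribed datum. One must also make sure the Jackson sums converge absolutely, so that the termwise manipulations (including commuting $L_{q,x}$ with the integral) are legitimate. I would state these regularity and summability assumptions explicitly and otherwise follow the computation above verbatim. A similar care is needed to interpret $D_{q,t}u(0,x)$ at the endpoint $t=0$, for instance as the limit along $t=q^n\to0$, using boundedness of $D_{q,t}v$ so that the integral term tends to zero.
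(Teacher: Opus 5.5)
Your proposal is correct and follows the paper's proof essentially step for step: $L_{q,x}$ is passed through the Jackson integral, Lemma~\ref{lemma1} is applied twice with boundary terms $v(qt,x;t)=0$ and $D_{q,t}v(qt,x;t)=f(t,x)$, and the initial data are read off from \eqref{Sol Duhamel 5} and from the formula for $D_{q,t}u$. Your technical caveats already go beyond what the paper makes explicit, and one more belongs on that list: the $m=0$ term $(1-q)t\,[D_{q,tt}v-L_{q,x}v](t,x;t)$ of the Jackson sum requires the auxiliary equation to hold at $t=s$ itself, i.e.\ on $[s,\infty)_q$ rather than on the open set $(s,\infty)_q$.
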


\begin{proof}
We argue as in Theorem \ref{Thm Duhamel 4} and we apply the components of the operator $D_{q,tt}-L_{q,x}$ separately to $u$ in (\ref{Sol Duhamel 5}). For the spatial component $L_{q,x}$ we simply have
\begin{equation}
    L_{q,x} u(t,x)=L_{q,x} w(t,x) + \int_0^t L_{q,x} v(t,x;s)\d_q s.\label{Duhamel proof 5.1}
\end{equation}
For the temporal component we get
\begin{equation}
    D_{q,t} u(t,x)=D_{q,t} w(t,x) + \int_0^t D_{q,t} v(t,x;s)\d_q s,\label{Duhamel proof 5.2}
\end{equation}
where we used Lemma \ref{lemma1} and the fact that $v(qt,x;t)=0$ by the initial condition in (\ref{Aux eqn Duhamel 5}). Differentiating (\ref{Duhamel proof 5.2}) again and noting that $D_{q,t} v(qt,x;t) = f(t,x)$, we arrive at
\begin{equation}
    D_{q,tt} u(t,x)=D_{q,tt} w(t,x) + f(t,x) + \int_0^t D_{q,tt} v(t,x;s)\d_q s.\label{Duhamel proof 5.3}
\end{equation}
Combining (\ref{Duhamel proof 5.1}) and (\ref{Duhamel proof 5.3}) and using the fact that $w$ and $v$ solve (\ref{Homog eqn Duhamel 5}) and (\ref{Aux eqn Duhamel 5}) respectively, we arrive at
\begin{equation*}
    D_{q,tt}u(t,x) - L_{q,x}u(t,x)= f(t,x).
\end{equation*}
To conclude the proof, we just observe from (\ref{Sol Duhamel 5}) that $u(0,x)=w(0,x)=u_0(x)$ and from (\ref{Duhamel proof 5.2}) that $D_{t}u(0,x)=D_{t}w(0,x)=u_1(x)$.
\end{proof}

\subsection{$k$-order Duhamel's principle for $q$-evolution equations}
If we consider the following Cauchy problem for the $k$-order non-homogeneous $q$-evolution equation
\begin{equation}\label{Equation Duhamel general k}
    \left\lbrace
    \begin{array}{l}
    D_{q,t}^{(k)}u(t,x)-L_{q,x}u(t,x)=f(t,x) ,~~~(t,x)\in\left(0,\infty\right)_q\times \mathbb{R}_q,\\
    D_{q,t}^{(j)}u(0,x)=u_{j}(x),\,\text{for}\quad j=0,\cdots,k-1,\,\,\, x\in\mathbb{R}_q,
    \end{array}
    \right.
\end{equation}
then, the principle reads as follows:

\begin{thm}\label{Thm Duhamel general k}
    The solution to the Cauchy problem \eqref{Equation Duhamel general k} is given by
    \begin{equation}
    u(t,x)= w(t,x) + \int_0^t v(t,x;s)\d_{q} s,
    \end{equation}
    where $w(t,x)$ is the solution to the homogeneous problem
    \begin{equation}
    \left\lbrace
    \begin{array}{l}
    D_{q,t}^{(k)}w(t,x)-L_{q,x}w(t,x)=0 ,~~~(t,x)\in\left(0,\infty\right)_q\times \mathbb{R}_q,\\
    D_{q,t}^{(j)}w(0,x)=w_{j}(x),\,\text{for}\quad j=0,\cdots,k-1,\,\,\, x\in\mathbb{R}_q,
    \end{array}
    \right.
    \end{equation}
    and $v(t,x;s)$ solves the auxiliary Cauchy problem
    \begin{equation}
    \left\lbrace
    \begin{array}{l}
    D_{q,t}^{(k)}v(t,x;s)-L_{q,x}v(t,x;s)=0 ,~~~(t,x)\in\left(s,\infty\right)_q\times \mathbb{R}_q,\\
    D_{q,t}^{j}v(qs,x;s)=0,\,\text{for}\quad j=0,\cdots,k-2,\,\, D_{q,t}^{(k-1)}v(qs,x;s)=f(s,x),\,\,\, x\in\mathbb{R}_q,
    \end{array}
    \right.
    \end{equation}
    where $s$ is a fixed parameter varying over $\left(0,\infty\right)_q$.
\end{thm}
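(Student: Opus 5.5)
The plan is to follow the pattern of Theorems \ref{Thm Duhamel 4} and \ref{Thm Duhamel 5}: apply $D_{q,t}$ repeatedly to $u(t,x)=w(t,x)+\int_0^t v(t,x;s)\d_q s$ using Lemma \ref{lemma1}. At each step the boundary terms vanish by the auxiliary initial conditions, except at the last step, where the forcing term $f$ appears. I would run it as a finite induction on $j$ rather than an induction on $k$. This is cleaner, since the order $k$ is fixed and only the number of derivatives taken changes.

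\textbf{Inductive claim.} For $j=0,1,\dots,k-1$,
\begin{equation*}
D_{q,t}^{(j)}u(t,x)=D_{q,t}^{(j)}w(t,x)+\int_0^t D_{q,t}^{(j)}v(t,x;s)\d_q s .
\end{equation*}
The case $j=0$ is the definition of $u$.

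\textbf{Inductive step.} Assume the claim for some $j\le k-2$ and apply $D_{q,t}$. Use the first form of Lemma \ref{lemma1} with $f(t,s)=D_{q,t}^{(j)}v(t,x;s)$. This gives
\begin{equation*}
D_{q,t}^{(j+1)}u(t,x)=D_{q,t}^{(j+1)}w(t,x)+D_{q,t}^{(j)}v(qt,x;t)+\int_0^t D_{q,t}^{(j+1)}v(t,x;s)\d_q s .
\end{equation*}
Since $j\le k-2$, the auxiliary condition $D_{q,t}^{(j)}v(qs,x;s)=0$ at $s=t$ kills the boundary term, and the claim holds for $j+1$.

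\textbf{Final derivative.} Applying $D_{q,t}$ once more to the case $j=k-1$ produces the boundary term $D_{q,t}^{(k-1)}v(qt,x;t)=f(t,x)$. Hence
\begin{equation*}
D_{q,t}^{(k)}u=D_{q,t}^{(k)}w+f+\int_0^t D_{q,t}^{(k)}v\,\d_q s .
\end{equation*}
Since $L_{q,x}$ acts only on $x$ and is linear, it commutes with the Jackson integral in $s$, so $L_{q,x}u=L_{q,x}w+\int_0^t L_{q,x}v\,\d_q s$. Subtracting and using the two homogeneous equations gives $D_{q,t}^{(k)}u-L_{q,x}u=f$.

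\textbf{Initial data.} Set $t=0$ in the inductive claim for each $j\le k-1$. The Jackson integral over $[0,0]$ vanishes, so $D_{q,t}^{(j)}u(0,x)=D_{q,t}^{(j)}w(0,x)=u_j(x)$. Here the $w_j$ in the statement is read as $u_j$, and $D_{q,t}^{j}$ is read as $D_{q,t}^{(j)}$.

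\textbf{Main obstacle.} The delicate point is the boundary term in Lemma \ref{lemma1}. It is evaluated at $(qt,t)$, i.e. $v(qt,x;t)$, which is exactly why the auxiliary data are imposed at time $qs$ rather than $s$. So I must use the first form of the lemma consistently and check that the data match at $s=t$. One also needs $v(\cdot,x;s)$ to be defined at $qs$, just below the nominal domain $(s,\infty)_q$, and the interchange of $D_{q,t}$ and $L_{q,x}$ with the Jackson series to be justified. I would handle the latter by assuming absolute convergence of the relevant $q$-series, as in the preliminaries.
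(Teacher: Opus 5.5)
Your proposal is correct and is essentially the paper's argument. The paper's one-line ``induction over $k$'' amounts to iterating the Lemma~\ref{lemma1} computation of Theorems~\ref{Thm Duhamel 4} and~\ref{Thm Duhamel 5}: the boundary terms $D_{q,t}^{(j)}v(qt,x;t)$ vanish for $j\le k-2$ and equal $f(t,x)$ for $j=k-1$, which is exactly your induction on the number of derivatives taken. You also rightly use $v(qt,x;t)$ where the paper's first-order proof misprints $v(t,x;t)$.

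One point to add, which the paper shares, concerns the step that applies the homogeneous equation inside $\int_0^t\bigl(D_{q,t}^{(k)}v-L_{q,x}v\bigr)(t,x;s)\,\d_q s$. The $m=0$ term $s=t$ of this Jackson sum carries weight $(1-q)t$, so the auxiliary equation must be imposed on $[s,\infty)_q$ rather than $(s,\infty)_q$. Relatedly, the data at $qs$ actually prescribe $v(q^i s,x;s)$ for $i=1,\dots,k$, not just $v(qs,x;s)$.
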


\begin{proof}
    The proof follows by induction over $k$.
\end{proof}

\begin{rem}
    In Theorem \ref{Thm Duhamel 4}, Theorem \ref{Thm Duhamel 5} and Theorem \ref{Thm Duhamel general k}, the $q$-differential operator $L_{q,x}$ can be either generated by Jackson's operator \eqref{Jack} or Rubin's operator \eqref{Rub}. The results remain true.
\end{rem}

\begin{rem}
    We easily see that if $q\rightarrow 1^-$, then we recapture the classical Duhamel's principle.
\end{rem}


\begin{thebibliography}{99}

\bibitem[AHH24]{AHH24} N. Allouch, S. Hamani, J. Henderson. \newblock Boundary value problem for fractional q-difference equations. \newblock {\it Nonlinear Dynamics and Systems Theory}, 24(2) (2024) 111–122.

\bibitem[CK20]{CK20} P. Cheung, V. Kac. \newblock Quantum calculus. \newblock {\it Edwards Brothers. Inc., Ann Arbor, MI, USA}, 2020.

\bibitem[ER18]{ER18} M. R. Ebert, M. Reissig. \newblock Methods for Partial Differential Equations. \newblock {\it Birkhäuser}, 2018.

\bibitem[Ern12]{Ern12} T. Ernst. \newblock A comprehensive treatment of q-calculus. \newblock {\it Birkhäuser/Springer Basel AG, Basel}, 2012.

\bibitem[Eva98]{Eva98} L. C. Evans. \newblock Partial Differential Equations. \newblock {\it American Mathematical Society}, 1998.

\bibitem[GR04]{GR04} G. Gasper, M. Rahman. \newblock Basic Hypergeometric Series. \newblock {\it 2nd ed. Cambridge University Press}, 2004.

\bibitem[Ism05]{Ism05} M. E. H. Ismail. \newblock Classical and Quantum Orthogonal Polynomials in One Variable. \newblock {\it Cambridge University Press}, 2005.

\bibitem[Jac10]{Jac10} F. H. Jackson. \newblock On a q-definite integrals. \newblock {\it Quart. J. Pure and Appl. Math.}, 41, 193–203. 1910.

\bibitem[KR24]{KR24} J. Y. Kang, C. S. Ryoo, \newblock Forms of (q,h)-difference equations and properties of their solutions. \newblock {\it AIMS Mathematics}, 9(11): 29645–29661. 2024.

\bibitem[Rub97]{Rubin1} R. L. Rubin. \newblock A $q^2$-analogue operator for $q^2$-analogue Fourier analysis. \newblock {\it J. Math. Anal. Appl.}, 212, 571-582. 1997.

\bibitem[Rub07]{Rubin2} R. L. Rubin. \newblock Duhamel solutions of non-homogeneous $q^2$-analogue wave equations. \newblock {\it Proc. Amer. Math. Soc.}, 135, 777-785. 2007.

\bibitem[Seb22]{Seb22} M. E. Sebih. Evolution equations with singular coefficients. PhD thesis. \newblock {\it Université Djillali Liabes}, 2022.

\bibitem[SSK24]{SSK24} B. Shimelis, D. L. Suthar, D. Kumar. \newblock Fractional q-calculus operators and q-kinetic equations. \newblock {\it Axioms}, 13(2), 78. 2024.

\bibitem[ZG24]{ZG24} A. Zafer, Z. N. Gurkan. \newblock Oscillation behavior of second-order self-adjoint q-difference equations. \newblock {\it AIMS Mathematics}, 9(7): 16876–16884. 2024.

\end{thebibliography}
\end{document}